\date{}
\newcommand{\A}{{\mathcal A}}
\newcommand{\Hi}{{\mathcal H}}
\newcommand{\Z}{{\mathbbm Z}}
\newcommand{\R}{{\mathbbm R}}
\newcommand{\C}{{\mathbbm C}}
\newcommand{\I}{{\mathbbm I}}
\newcommand{\X}{{\mathbbm X}}
\newcommand{\Fib}{{\mathrm F}}
\newcommand{\TM}{{\mathrm{TM}}}
\newtheorem{theorem}{Theorem}[section]
\newtheorem{lemma}[theorem]{Lemma}
\theoremstyle{definition}
\newtheorem{remark}[theorem]{Remark}
\theoremstyle{definition}
\theoremstyle{definition}
\theoremstyle{definition}
\numberwithin{equation}{section}
\newcommand{\set}[1]{\left\{#1\right\}}
\newcommand{\eqdef}{\overset{\mathrm{def}}=}
\begin{document}

\title[Resolvent Methods for Quantum Walks]{Resolvent Methods for Quantum Walks with an Application to a Thue-Morse Quantum Walk}

\author[J.\ Fillman]{Jake Fillman}

\maketitle

\begin{abstract}
In this expository note, we discuss spatially inhomogeneous quantum walks in one dimension and describe a genre of mathematical methods that enables one to translate information about the time-independent eigenvalue equation for the unitary generator into dynamical estimates for the corresponding quantum walk. To illustrate the general methods, we show how to apply them to a 1D coined quantum walk whose coins are distributed according to an element of the Thue--Morse subshift.
\end{abstract}


\section{Introduction}

Quantum walks in one dimension comprise a popular class of models in mathematics, physics, and computer science; see \cite{AVWW, BGVW,CGMV, CGMV2, CedWer16, DFO, DFV, DMY2, GVWW, J11, J12, K14, KS11, KS14, SK2010, ST12} and references therein for a small sampler on the literature. A (one-dimensional coined) \emph{quantum walk} is described by a unitary operator on the Hilbert space
\[
\mathcal{H} \eqdef \ell^2(\Z) \otimes \C^2,
\]
 which models a state space in which a wave packet comes equipped with a ``spin'' at each integer site. The elementary tensors of the form
\[
\delta_n^\pm \eqdef \delta_n \otimes e_\pm,
\quad n \in \Z,
\]
comprise an orthonormal basis of $\Hi$, where we denote the standard orthonormal basis of $\C^2$ by
\[
e_+ \eqdef \begin{bmatrix}
1 \\ 0
\end{bmatrix},
\quad
e_-
\eqdef
\begin{bmatrix}
0 \\ 1
\end{bmatrix}.
\]
A time-homogeneous quantum walk is given as soon as coins
\begin{equation}\label{e.timehomocoins}
C_{n}
=
\begin{pmatrix}
c^{11}_{n} & c^{12}_{n} \\
c^{21}_{n} & c^{22}_{n}
\end{pmatrix}
\in \mathbbm U(2), \quad n \in \Z,
\end{equation}
are specified. As one passes from time $\ell$ to time $\ell + 1$, the update rule of the quantum walk is given by $U = SC$, where $S$ denotes the shift $S\delta_n^\pm = \delta^\pm_{n \pm 1}$, and
\[
C \eqdef \bigoplus_{n \in \Z} C_n.
\]
That is to say, $C$ is defined by
\[
C: \delta_{n}^+ 
\mapsto
  c^{11}_{n} \delta_{n}^+ 
+ c^{21}_{n} \delta_{n}^- , \quad
C: \delta_n^-  
\mapsto
  c^{12}_{n} \delta_{n}^+ 
+ c^{22}_{n} \delta_{n}^-  .
\]
Given an initial state $\psi \in \Hi$ normalized by $\| \psi \| = 1$, we are interested in the time evolution of the vector $\psi$, that is, we want to study the evolution of $\psi(\ell) = U^\ell \psi$ as $\ell \in \Z_+$ grows. The most favorable situations are those in which the walk is \emph{translation-invariant}, i.e., there exists $q$ such that $C_{n+q} = C_n$ for all $n$. In this case, one can explicitly solve the walk via a Floquet--Bloch transform, and one deduces strong ballistic motion and an explicit expression for the asymptotic group velocity; see \cite[Theorem~4]{AVWW} and \cite[Corollary~9.3]{DFO}. In fact, ballistic motion with an explicit group velocity also holds for quantum walks that are rapidly and uniformly approximated by translation-invariant quantum walks~\cite[Remark~1.3.(4)]{FillmanCMP}.

Naturally, one wants to go beyond exactly solvable models. In this short note, we will give a brief introduction to the general methods of~\cite{DFO}. Specifically, we will describe a general method that enables one to study the time-dependent spreading characteristics of a quantum walk with spatially inhomogeneous coins by establishing suitable estimates on the time-independent eigenvalue equation. Concretely, one must estimate matrix elements of $R_U(z) = (U-z\I)^{-1}$, the \emph{resolvent} of the unitary generator $U$. This is particularly fruitful in one dimension, where we can probe the resolvent by using a transfer matrix formalism to study solutions of the time-independent eigenvalue equation.

In this paper, we focus on the case $\psi = \delta_0^+$ for simplicity, but it is a straightforward matter to generalize this to different initial states. To quantify the spreading rate of $U^\ell \delta_0^+$, we first put
$$
a(n,\ell) 
= 
\left| \left\langle \delta_n^+ , U^\ell \delta_0^+ \right\rangle \right|^2
+ \left| \left\langle \delta_n^- , U^\ell \delta_0^+ \right\rangle \right|^2,
\quad
\ell \in \Z_+, \, n \in \Z,
$$
which can be thought of as the probability that the wavepacket is at site $n$ at time $\ell$. The astute observer will notice that both terms that comprise $a(n,\ell)$ correspond to the modulus squared of a Fourier coefficient of a spectral measure of $U$. Since regularity estimates on measures enable one to prove decay estimates on their Fourier coefficients, one can prove quantitative estimates on wavepacket propagation by investigating regularity and singularity of spectral measures of $U$, an approach that is proposed and executed in~\cite{DFV}. However, there are two nontrivial drawbacks to this method. First, in some cases, the spectral measures are so singular that the kinds of estimates that one can deduce from spectral regularity are essentially useless; this is the case in random polymer models, for which one expects pure point spectral measures. Second, it is often quite difficult in concrete examples to prove quantitative estimates on the regularity of spectral measures. This is the case for quantum walks with Thue--Morse coins discussed herein; it is known that their spectral measures are singularly continuous, but we have no quantitative information regarding the moduli of continuity.

In one-dimensional models, regularity estimates on spectral measures may be established using \emph{subordinacy theory}, which relates growth and decay estimates on generalized eigenfunctions to quantitative regularity of spectral measures. The quantitative theory of subordinacy is described for CMV matrices in~\cite[Section~10.8]{S2}, so, in light of the CGMV connection~\cite{CGMV,CGMV2}, this analysis also applies to 1D coined quantum walks. Thus, one naturally may wonder whether it is possible to ``cut out the middleman'' of subordinacy theory and to go directly from eigenfunction estimates to dynamical information. One avenue that enables one to do precisely this is the following variant of the Parseval formula.

\begin{theorem}[\mbox{Damanik, F., Vance~\cite[Proposition~3.16]{DFV}}] \label{t:dfv}
Let $\Hi$ be a Hilbert space, and $\varphi, \; \psi \in \Hi$. If $U$ is a unitary operator on $\Hi$, one has
\begin{equation} \label{eq:pars}
\sum_{\ell = 0}^\infty e^{-2\ell/L} |\langle \psi, U^\ell \varphi \rangle |^2
=
e^{2/L} \int_0^{2\pi} \left| \left\langle \psi, \left(U-e^{i\tau + L^{-1}} \I_{\Hi} \right)^{-1} \varphi \right\rangle \right|^2 \, \frac{d\tau}{2\pi}
\end{equation}
for all $L > 0$.
\end{theorem}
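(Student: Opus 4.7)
The plan is to expand the resolvent as a Neumann series in $U/z$ and then apply Parseval's identity for Fourier series on the circle.

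First I would observe that for $z = e^{i\tau + 1/L}$ we have $|z| = e^{1/L} > 1$, so since $U$ is unitary, $\|U/z\| = e^{-1/L} < 1$ and the Neumann series
\[
(U - z\I)^{-1} = -z^{-1}\sum_{\ell=0}^\infty z^{-\ell} U^\ell
\]
converges in operator norm. Pairing against $\varphi$ and $\psi$ and substituting $z^{-\ell-1} = e^{-(\ell+1)/L} e^{-i(\ell+1)\tau}$ yields
\[
\bigl\langle \psi, (U - z\I)^{-1}\varphi\bigr\rangle = -\sum_{\ell=0}^\infty e^{-(\ell+1)/L}\bigl\langle\psi, U^\ell \varphi\bigr\rangle\, e^{-i(\ell+1)\tau}.
\]
Since $|\langle \psi, U^\ell \varphi\rangle| \le \|\psi\|\|\varphi\|$ and the factor $e^{-(\ell+1)/L}$ decays geometrically, the right-hand side is an absolutely convergent Fourier series in $\tau$.

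Next I would apply the Parseval identity on $L^2([0,2\pi],\, d\tau/(2\pi))$ with respect to the orthonormal basis $\{e^{-in\tau}\}_{n\in\Z}$: the $L^2$-norm squared of the above series equals the sum of the squared moduli of its Fourier coefficients. Concretely,
\[
\int_0^{2\pi}\Bigl|\bigl\langle \psi,(U-z\I)^{-1}\varphi\bigr\rangle\Bigr|^2\,\frac{d\tau}{2\pi}
= \sum_{\ell=0}^\infty e^{-2(\ell+1)/L}\bigl|\bigl\langle \psi, U^\ell\varphi\bigr\rangle\bigr|^2.
\]
Finally I would factor out the $\ell$-independent constant $e^{-2/L}$ from the right-hand side and multiply both sides by $e^{2/L}$ to recover \eqref{eq:pars}.

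There is no serious obstacle: the only things to check carefully are the convergence of the Neumann series (immediate from $\|U\|=1 < |z|$) and the bookkeeping of the $e^{\pm 1/L}$ factor produced by the shift of summation index from $n = \ell+1$ back to $\ell$. One could equivalently route the argument through the spectral theorem, writing $\langle \psi, U^\ell\varphi\rangle = \int e^{i\ell\theta}\,d\mu_{\psi,\varphi}(\theta)$ for the associated complex spectral measure and recognizing the integral in \eqref{eq:pars} as the $L^2$ norm of the Poisson-type integral of $\mu_{\psi,\varphi}$ outside the unit disk, but the Neumann-series approach above is the most direct and avoids any appeal to spectral theory.
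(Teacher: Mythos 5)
Your proof is correct. The paper does not supply its own argument for this theorem but simply cites it from \cite{DFV}; the route you take---expand $(U-z\I)^{-1}$ as a Neumann series in $U/z$ (valid since $\|U\|=1<|z|=e^{1/L}$), read off the resulting expression as an absolutely convergent Fourier series in $\tau$ with geometrically decaying coefficients $-e^{-(\ell+1)/L}\langle\psi,U^\ell\varphi\rangle$, invoke Parseval on $L^2([0,2\pi],d\tau/2\pi)$, and shift the index to extract the $e^{-2/L}$ factor---is exactly the standard proof of this identity and matches the one given in the cited reference.
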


The identity \eqref{eq:pars} allows us to connect (time-averaged) dynamical information (on the left hand side) to generalized eigenfunctions, since the matrix elements of the resolvent may (almost) be recovered from the eigenvalue equation. For example, if $\phi = (U-z)^{-1} \delta_0^+$, then $U\phi = z\phi + \delta_0^+$, so $\phi$ is ``almost'' an eigenvector of $U$. In light of the averaging present on the left-hand side of Theorem~\ref{t:dfv}, we will be interested in the time-averaged dynamical probabilities, given by
\begin{equation}\label{e.averagedprob}
\widetilde{a}(n,L) = \left(1-e^{-2/L}\right) \sum_{\ell=0}^{\infty} e^{-2\ell/L} a(n,\ell),
\quad
L > 0, \, n \in \Z.
\end{equation}
Since $U$ is a unitary operator, $\| U^\ell \psi \| = 1$ for every $\ell$, so we may view $a(\cdot,\ell)$ and $\widetilde a(\cdot,L)$ as defining a pair of probability distributions on $\Z$. From this point of view, it makes sense to describe the spreading of these distributions in terms of their moments. In this paper, we will focus on the time-averaged setting, and so we define
\[
\left\langle |X|^p \right\rangle(L)
=
\sum_{n \in \Z}  (|n|+1)^p \, \widetilde{a}(n,L),
\quad
L, \; p > 0.
\]
To better understand the moments $|X|^p$, let us consider their behavior in two simple cases.
\begin{enumerate}
\item First, consider $U = \I_{\ell^2(\Z)}$, the identity operator. Clearly, $U^\ell \delta_0^+ = \delta_0^+$ for all $\ell$. Consequently, $\langle|X|^p\rangle(L) = 1$ for all $L$. Thus, $|X|^p$ remains bounded in this case.

\item At the other extreme, consider $U = S$, the shift defined above. Clearly, then, $U^\ell \delta_0^+ = \delta_\ell^+$ for $\ell \in \Z_+$, so $\langle|X|^p\rangle(L)$ grows like $L^p$ as $L\to \infty$. 
\end{enumerate}
 In order to compare the growth of the $p$th moment to polynomial growth of the form $L^{\beta p}$ for a suitable exponent $\beta$, we define (upper and lower) \emph{transport exponents} by
\[
\widetilde{\beta}^+(p) 
= 
\limsup_{L \to \infty} \frac{\log \left( \left\langle |X|^p \right\rangle (L) \right)}{p \log(L)}, 
\quad 
\widetilde{\beta}^-(p) 
= \liminf_{L \to \infty} \frac{\log \left( \left\langle|X|^p\right\rangle(L) \right)}{p \log(L)}.
\]
The values for $\widetilde \beta^\pm$ range from zero to one. In view of the previous examples, $\widetilde \beta = 1$ represents (time-averaged) ballistic transport and $\widetilde \beta = 0$ represents (a weak form of) dynamical localization. Notice, by Jensen's inequality,  $\widetilde{\beta}^{\pm}$ are both non-decreasing functions of $p > 0$~\cite[Lemma~2.7]{DT2010}.

\subsection{Thue--Morse Coins}

As a concrete example of how one can implement the overall program relating eigenfunction estimates to dynamical estimates, we will investigate the transport exponents for spatially inhomogeneous quantum walks with coins modulated by the Thue--Morse subshift.  We begin by describing the Thue--Morse substitution. Let $\A = \{0,1\}$. The \emph{free monoid} over $\A$ denotes the set of words of finite length with letters drawn from $\A$, and is denoted by $\A^*$. The \emph{Thue--Morse substitution} is the map $S:\A \to \A^*$ defined by $S(0) = 01$ and $S(1) = 10$. This extends by concatenation to maps $S:\A^* \to \A^*$, and $S:\A^{\Z_+} \to \A^{\Z_+}$. We may then iterate $S$ on the initial symbol 0 to obtain a sequence of words $w_n \eqdef S^n(0)$:
\begin{align*}
w_0 & = 0 \\
w_1 & = S(0) = 01 \\
w_2 & = S(01) = S(0)S(1) = 0110 \\
w_3 & = S(0110) = S(0)S(1)S(1)S(0) = 01101001 \\ 
\cdots
\end{align*}
Notice that $w_n$ is always a prefix of $w_{n+1}$, and thus, there is a natural limiting infinite word
\[
w_\TM
\eqdef
0110100110010110 \cdots
\in \A^{\Z_+},
\]
and it is easy to see that $S(w_\TM)=w_\TM$, i.e., $w_\TM$ is invariant under the action of $S$. The associated \emph{subshift} is defined to be the set of all two-sided infinite $0$-$1$ sequences that have the same local factor structure as $w_\TM$, that is,
\[
\X_{\TM}
=
\set{x:\Z \to \A : \text{ every finite subword of } x \text{ is a subword of } w_\TM}.
\]
One may also view $\X_\TM$ as the collection of accumulation points of $\{ T^n w_\TM : n \ge 1\}$, where $T$ denotes the left shift 
\[
[Tx]_n \eqdef x_{n+1},
\quad
n \in \Z, \; x \in \A^{\Z}.
\]
Aperiodic subshifts such as $\X_\TM$ are popular as tractable models of one-dimensional \emph{quasicrystals}, i.e., mathematical structures that simultaneously exhibit \emph{aperiodicity} (the absence of translation-invariance) and \emph{long-range order} \cite{BaDaGr}. For a much more thorough introduction into mathematical aspects of quasicrystals, the reader is referred to~\cite{BaakeGrimm}. The Fibonacci substitution, defined by $S_\Fib(0) = 01$ and $S_\Fib(1) = 0$ is another popular quasi-crystal model. Quantum walks with Fibonacci coins were studied numerically in \cite{RMM} and mathematically in \cite{DFO,DFV,DMY2}.

 Now, pick phases $\theta,\phi \in \R$. For each $x \in \X_\TM$, we obtain a quantum walk operator $U = U_{x,\theta,\phi}$ by choosing coins
\begin{equation} \label{eq:pdcoins}
C_n
=
C_{n,x}
=
\begin{cases}
R_\phi   & \text{if }x_n = 0 \\
R_\theta & \text{if } x_n = 1,
\end{cases}
\end{equation}
where
\[
R_\gamma
\eqdef
\begin{bmatrix}
\cos \gamma & -\sin \gamma \\
\sin \gamma &  \cos \gamma
\end{bmatrix}
\quad
\gamma \in \R.
\]

\begin{theorem}\label{t:pd}
Fix $\theta,\phi \in (0,\pi/2)$ and $x \in \X_\TM$, and denote by $U = U_{x,\theta,\phi}$ the quantum walk operator with coins $C_n$ defined by \eqref{eq:pdcoins}, and denote the corresponding transport exponents by $\widetilde \beta^\pm(p) = \widetilde \beta_{x,\theta,\phi}^\pm(p)$. For all $\theta, \;\phi$, all $x \in \X_\TM$, and all $p > 0$, we have
\[
\widetilde \beta^+(p)
\ge 
\widetilde \beta^-(p)
\ge 
1- \frac{1}{p}.
\]
\end{theorem}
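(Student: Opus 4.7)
The plan is to combine Theorem~\ref{t:dfv} with transfer-matrix estimates tailored to the Thue--Morse substitution. Specifically, applying Theorem~\ref{t:dfv} with $\psi=\delta_n^\sigma$, $\varphi=\delta_0^+$, and summing over $\sigma\in\{+,-\}$ yields
\[
\widetilde a(n,L)
=
(1-e^{-2/L})\,e^{2/L}\int_0^{2\pi}\sum_{\sigma=\pm}\bigl|\langle\delta_n^\sigma,\,R_U(e^{i\tau+1/L})\delta_0^+\rangle\bigr|^2\frac{d\tau}{2\pi}.
\]
The target bound $\widetilde\beta^\pm(p)\ge 1-1/p$ is equivalent to $\langle|X|^p\rangle(L)\gtrsim L^{p-1}$, which will follow once one establishes that a $\gtrsim 1/L$ fraction of the time-averaged mass sits at sites $|n|\gtrsim L$; that is,
\[
\sum_{|n|>cL}\widetilde a(n,L)\;\gtrsim\;\frac{1}{L}\qquad\text{for all sufficiently large }L.
\]
Thus the dynamical statement reduces to showing that the ``almost-eigenfunction'' $R_U(z)\delta_0^+$ is not excessively concentrated near the origin (in a $\tau$-averaged sense) for $z=e^{i\tau+1/L}$.

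To access the resolvent matrix elements, I would pass through the CGMV correspondence~\cite{CGMV,CGMV2} to realize $U$ as an extended CMV matrix and introduce the associated $\SL(2,\C)$-valued transfer matrices $T_n(z)$ for the eigenvalue equation $U\phi=z\phi$. Since $\phi:=R_U(z)\delta_0^+$ satisfies this equation on $\Z\setminus\{0\}$, its values $\phi_n$ at distant sites are determined by transfer-matrix products applied to boundary data at the origin, subject to the $\ell^2$-condition at $\pm\infty$. The required resolvent estimate is then reduced to a uniform (or at worst sub-polynomial) upper bound on $\|T_n(z)\|$ for $z$ in a $1/L$-neighborhood of the unit-circle spectrum and $|n|\lesssim L$. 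For Thue--Morse coins this is supplied by trace-map analysis: the hierarchical factorization $S(0)=01$, $S(1)=10$ expresses long transfer-matrix products in terms of a small set of generators whose traces evolve under a trace map with bounded orbits at spectral parameters. The non-degeneracy hypothesis $\theta,\phi\in(0,\pi/2)$ ensures that the underlying cocycle is genuinely non-commutative, so the generator matrices behave nontrivially under this recursion. Inserting these transfer-matrix bounds back into the resolvent identity $(U-z)\phi=\delta_0^+$ then produces the tail estimate above and completes the argument.

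The principal obstacle lies in this last step. While trace-map analysis for Thue--Morse is classical in the discrete Schr\"odinger setting, its adaptation to the CMV-type cocycles associated with the coins \eqref{eq:pdcoins} requires identifying the correct $\SL(2,\C)$-generators, verifying the appropriate trace-map recursion, and controlling its orbit \emph{uniformly} over the entire spectrum and over the small complex strip near the unit circle that Theorem~\ref{t:dfv} forces us to probe, rather than merely at a typical spectral parameter. A secondary technical point is the clean conversion of uniform transfer-matrix bounds into the quantitative spreading estimate on $\widetilde a$; here one must be careful to keep track of the two-component ($\pm$) structure of the quantum-walk eigenvalue equation when passing from the CMV formalism back to the original Hilbert space $\Hi$.
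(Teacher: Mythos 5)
Your high-level scaffolding matches the paper: invoke Theorem~\ref{t:dfv} to convert moment lower bounds into resolvent estimates, reduce to showing a $\gtrsim 1/L$ fraction of time-averaged mass sits at distance $\gtrsim L$, and access the resolvent through transfer matrices. But the key lemma you propose is both strictly stronger than what is needed and, as a target, false, and the mechanism you cite is essentially the opposite of the one that actually works.

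You want a uniform (or sub-polynomial) bound on $\|T_n(z)\|$ for $z$ in a $1/L$-strip around the \emph{entire} spectrum, obtained from ``bounded trace-map orbits at spectral parameters.'' For Thue--Morse this does not hold: the trace-map orbits are generically \emph{unbounded} on the spectrum (this is precisely the theme of~\cite{LQY2016}, cited in the paper), so the object you propose to control does not exist. The paper needs, and gets, far less: a transfer-matrix bound at the \emph{single} spectral parameter $z=i$ (Lemma~\ref{l:tmbounds}), then propagated by a Gronwall-type perturbation (\cite[Lemma~3.3]{DFO}) to a disk $|z-i|<\varepsilon$ on windows $|n-m|\le\varepsilon^{-1}$, after which the integral in~\eqref{eq:pars} is restricted to the arc $B_L=[\pi/2-L^{-1},\pi/2+L^{-1}]$. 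There is no need — and no ability — to control the whole unit circle.

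You also misidentify the algebraic source of the bound. You write that $\theta,\phi\in(0,\pi/2)$ ``ensures that the underlying cocycle is genuinely non-commutative.'' The opposite degeneracy is what is exploited: at $z=i$ the one-step matrices satisfy $A_\gamma(i)^2=-\I$ for every $\gamma$, so that $A_\theta(i)A_\phi(i)A_\phi(i)A_\theta(i)=A_\phi(i)A_\theta(i)A_\theta(i)A_\phi(i)=\I$. Combined with the fact that every $x\in\X_\TM$ decomposes into blocks $0110$ and $1001$, this makes the four-step transfer operators collapse to the identity, giving the uniform bound at $z=i$ essentially for free. (The role of $\theta,\phi\in(0,\pi/2)$ is only to keep $\sec\theta,\sec\phi$ finite and the walk genuinely inhomogeneous, not to create non-commutativity.) A secondary point you gesture at but do not resolve: one needs a \emph{lower} bound on $\|T_x(n,m;z)\vec v\|$, which the paper gets by combining the upper bound with $\det T_x=1$, not from the upper bound alone. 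Finally, the paper works directly with the quantum-walk transfer matrices defined in~\eqref{eq:tmdef}; the detour through the CGMV/CMV correspondence you propose is unnecessary and would only add bookkeeping.
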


We remark that Theorem~\ref{t:pd} is not new. Indeed, it is essentially~\cite[Theorem~8.2]{DFO}. However,~\cite{DFO} produces this result as a corollary of a fairly general piece of machinery,~\cite[Theorem~2.1]{DFO}. Thus, the goal of the present note is to illustrate the general manner in which resolvent estimates may be transformed into dynamical estimates in a simple (but nontrivial) scenario, so we will show how to prove the statement directly, without appealing to the general machinery of~\cite{DFO}.

\section{Proof of Lower Bounds}

In this section, we will describe the arguments that prove Theorem~\ref{t:pd}. There are two key ingredients: an exact renormalization scheme for eigenfunctions of $U$ and the presence of degenerate spectral parameters at which the corresponding transfer operators commute with one another. The renormalization scheme is most clearly exhibited by the so-called \emph{transfer matrices}, which we now introduce. Suppose that 
\begin{equation} \label{eq:eigeq}
U \psi = z \psi \text{ for some } z \in \C \text{ and some } \psi \in \C^\Z \otimes \C^2.
\end{equation}
We may represent $\psi$ in coordinates as
\[
\psi
=
\sum_{n \in \Z} \left( \psi_n^+  \delta_n^+ + \psi_n^- \delta_n^- \right),
\quad
\psi_n^\pm \in \C.
\]
Notice, we do \emph{not} assume that $\psi$ is a normalizable state in $\Hi$, i.e., we do \emph{not} assume
\[
\sum_{n \in \Z} \left( |\psi_n^+|^2 + |\psi_n^-|^2 \right) 
<
\infty.
\]
It is straightforward to deduce from \eqref{eq:eigeq} that
\begin{equation} \label{eq:eigrec}
\begin{bmatrix}
\psi_{n+1}^+ \\
\psi_n^-
\end{bmatrix}
=
M_x(n;z)
\begin{bmatrix}
\psi_n^+ \\
\psi_{n-1}^-
\end{bmatrix}
\text{ for all } n \in \Z
\end{equation}
for fixed $x \in \X_\TM$, where $M$ is defined by
\begin{equation} \label{eq:tmdef}
M_x(n;z)
\eqdef
\sec(\varphi_n)
\begin{bmatrix}
z^{-1} & -\sin(\varphi_n) \\
-\sin(\varphi_n) & z
\end{bmatrix},
\quad
\varphi_n
\eqdef
\begin{cases}
\phi & \text{ if } x_n = 0 \\
\theta & \text{ if } x_n = 1
\end{cases}
\end{equation}
Thus, we are concerned products involving the matrices:
\[
A_\phi(z)
\eqdef
\sec(\phi)
\begin{bmatrix}
z^{-1} & -\sin(\phi) \\
-\sin(\phi) & z
\end{bmatrix}
\quad
A_\theta(z)
\eqdef
\sec(\theta)
\begin{bmatrix}
z^{-1} & -\sin(\theta) \\
-\sin(\theta) & z
\end{bmatrix}
\]
In particular, to control growth and decay of eigenfunctions of $U$, it suffices to control the growth and decay of the norms of matrices of the form
\[
T_x(n,m;z)
=
\begin{cases}
\I_{2 \times 2} & n = m \\
M_x(n-1;z) M_x(n-2;z) \cdots M_x(m;z) & n > m \\
T_x(m,n;z)^{-1} & n < m
\end{cases}
\]

Thus, the key estimate is supplied by the following preliminary result.

\begin{lemma} \label{l:tmbounds}
For each $0 < \theta < \pi/2$, there is a constant $c = c(\theta,\phi) > 0$ such that
\begin{equation} \label{pdtmbounds}
\| T_x(n,m;i) \|
\leq
c
\end{equation}
for all $n,m \in \Z$ and all $x \in \X_\TM$.
\end{lemma}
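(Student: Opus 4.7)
The strategy rests on two observations: (i) at the degenerate spectral parameter $z=i$, both matrices $A_\phi(i)$ and $A_\theta(i)$ square to $-\I$, and (ii) the Thue--Morse sequence has a hierarchical structure that, combined with (i), makes arbitrarily long transfer products telescope up to bounded boundary effects.

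First I would verify (i) by direct computation. From \eqref{eq:tmdef} one reads $\tr A_\gamma(z) = \sec\gamma \,(z^{-1} + z)$ and $\det A_\gamma(z) = \sec^2\gamma\,(1 - \sin^2\gamma) = 1$, so at $z=i$ we have $\tr A_\gamma(i) = 0$ and $\det A_\gamma(i) = 1$. The Cayley--Hamilton theorem then gives
\[
A_\phi(i)^2 = A_\theta(i)^2 = -\I.
\]
Using that $S^2(0) = 0110$ and $S^2(1) = 1001$, the transfer products over either level-two substitution block (starting at a block boundary) are
\[
A_\phi(i) A_\theta(i) A_\theta(i) A_\phi(i) = A_\phi(i)(-\I)A_\phi(i) = \I, \qquad A_\theta(i) A_\phi(i) A_\phi(i) A_\theta(i) = \I.
\]
Hence every full $S^2$-block contributes exactly $\I$ to the transfer matrix product at $z=i$.

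The key structural input I need is the following: for every $x \in \X_\TM$ there exists an offset $r = r(x) \in \{0,1,2,3\}$ such that for each $k \in \Z$ the tuple $(x_{r+4k}, x_{r+4k+1}, x_{r+4k+2}, x_{r+4k+3})$ is either $0110$ or $1001$. This is essentially the recognizability of the Thue--Morse substitution; for this lemma it suffices to argue directly, using that $x \in \X_\TM$ means that arbitrarily large windows $x|_{[-N,N]}$ coincide with windows of $w_\TM$, and by passing to a subsequence along which the matching translation $p_N$ is constant modulo $4$, one obtains $r$. This step is the main point that requires a little care; the remainder is routine linear algebra.

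Finally, given $n > m$, I would split the product
\[
T_x(n,m;i) = M_x(n-1;i)\, M_x(n-2;i)\,\cdots\, M_x(m;i) = L \cdot \bigl(\I \cdot \I \cdots \I\bigr) \cdot R,
\]
where the central factor collects the transfer matrices over the full $S^2$-blocks entirely contained in $[m,n-1]$ (each equal to $\I$ by the previous step), while $L$ and $R$ are the contributions from the at-most-three-wide unaligned suffix and prefix, respectively. Setting $C_0 = C_0(\theta,\phi) = \max(\|A_\phi(i)\|, \|A_\theta(i)\|)$, we get $\|L\|, \|R\| \le C_0^3$, and hence $\|T_x(n,m;i)\| \le C_0^6 =: c(\theta,\phi)$, uniformly in $n,m$ and in $x \in \X_\TM$, which is precisely \eqref{pdtmbounds}.
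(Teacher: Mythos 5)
Your proof is correct and follows the same route as the paper's: the key block identity $A_\phi(i)A_\theta(i)A_\theta(i)A_\phi(i) = A_\theta(i)A_\phi(i)A_\phi(i)A_\theta(i) = \I_{2\times 2}$, the decomposition of any $x \in \X_\TM$ into $0110$ and $1001$ blocks, and absorption of the unaligned boundary terms into a constant. You have simply spelled out, via the Cayley--Hamilton observation that $A_\gamma(i)^2 = -\I$ and the subsequence/recognizability argument for the block offset, what the paper dismisses as ``straightforward to check'' and ``follows immediately by interpolation.''
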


\begin{proof}
It is straightforward to check that 
\[
A_\theta(i) A_\phi(i)A_\phi(i) A_\theta(i) 
=
A_\phi(i)A_\theta(i)  A_\theta(i) A_\phi(i)
=
\I_{2\times 2}.
\]
Since any $x \in \X_\TM$ may be uniquely decomposed into subwords of the form ``0110'' and ``1001'', the lemma follows immediately by interpolation.
\end{proof}

\begin{remark}
In the language of \cite{LQY2016}, Lemma~\ref{l:tmbounds} shows that $z=i$ is a ``Type-I'' spectral parameter for the Thue--Morse quantum walk. We chose $z=i$ for concreteness, but there is a countably dense subset of the spectrum upon which the transfer matrices are bounded in this fashion; compare \cite[Lemma~8.1]{DFO}.
\end{remark}

\begin{proof}[Proof of Theorem~\ref{t:pd}]

The theorem follows is a consequence of Lemma~\ref{l:tmbounds} and \cite[Theorem~2.1]{DFO}. Since this paper is meant to be an illustration of the methods, though, let us say a few more words about how the arguments develop. Throughout, we use $f \gtrsim g$ to denote that there is a constant $c$ such that $f \geq c g$; constants may depend on $\theta$, $\phi$, and $p$, but will be independent of $n$, $L$, $x$, and $\varepsilon$.

Notice that the right-hand side of the Parseval identity~\eqref{eq:pars} involves an integral over $\tau$, so we need effective estimates not just at $z=i$, but on a set of phases with positive length. Thus, the first step is to perturb Lemma~\ref{l:tmbounds} in the quasi-energy; concretely, Lemma~\ref{l:tmbounds} gives information when the spectral parameter is $z = i$, and we can parlay this into information about spectral parameters $z$ that are very close to $z=i$. We may do this at the expense of restricting the range of $n$ and $m$ to suitable finite-length windows; such ideas have a long and venerable history in differential equations and usually go by the name \emph{Gronwall's inequality}. The discrete version that we need is furnished by~\cite{DFO}. In particular, by Lemma~\ref{l:tmbounds} and~\cite[Lemma~3.3]{DFO},\footnote{The matrices in~\cite[Lemma~3.3]{DFO} are the so-called Szeg\H{o} matrices, which are not exactly the same as those introduced in \eqref{eq:tmdef}. However, these matrices are conjugate to one another in a natural manner; cf.\ \cite[Equation~(3.3)]{DFO}.} there is a constant $K_1$ (independent of $n,m,z$, and $\varepsilon$), so that
\begin{equation} \label{eq:pert:tmest}
\| T_x(n,m;z) \|
\leq K_1
\end{equation}
whenever $|z-i| < \varepsilon$ and $|n-m| < \varepsilon^{-1}$ for some $0 < \varepsilon < 1$. Since $T_x(n,m;z)$ is unimodular, we get
\begin{equation} \label{eq:pert:vecest}
\|T_x(n,m;z) \vec v \|
\ge
K_1^{-1}
\text{ whenever }
|z-i| < \varepsilon, \; |n-m| \le \varepsilon^{-1}, \text{ and } \|\vec v\| = 1.
\end{equation}
For each $z$ with $|z| > 1$, define $\psi = \psi_z \eqdef (U-z)^{-1}\delta_0^+$. Since $U\psi = z\psi + \delta_0^+$, one may check that the recursion~\eqref{eq:eigrec} holds for all $n \neq -1$.  In light of this and \eqref{eq:pert:vecest}, we may estimate the matrix elements of the resolvent via
\begin{equation} \label{eq:resolventest}
\left|\langle \delta_n, (U-e^{i\tau + \eta})^{-1} \delta_0^+ \rangle\right|^2
\gtrsim
1
\; \text{ whenever } |e^{i\tau + \eta} - i| < \varepsilon \text{ and }
\frac{\varepsilon^{-1}}{2} < |n| < \varepsilon^{-1},
\end{equation}
where $\tau \in \R$ and $\eta > 0$. In~\eqref{eq:resolventest} and below, we use the shorthand notation
\[
|\langle \delta_n, \varphi\rangle|^2
\eqdef
|\langle \delta_n^+, \varphi\rangle|^2 + |\langle \delta_n^-, \varphi\rangle|^2.
\]
Now, we combine everything to estimate the time-averaged moments. First, use the definition and remove the terms with $|n| < 2L$ and $|n| > 4L$ to obtain
\[
\left\langle |X|^p\right\rangle(L)
=
\sum_{n \in \Z} |n|^p \, \widetilde a(n,L)
\geq 
L^p \left(1-e^{-2/L}\right) \sum_{2L \le |n| \le  4L} \sum_{\ell = 0}^\infty e^{-2\ell/L} \left|\langle \delta_n, U^\ell \delta_0^+ \rangle\right|^2.  
\]
Of course, $1-e^{-2/L} \sim L^{-1}$ as $L \to \infty$, so we may estimate this as
\[
L^p \left(1-e^{-2/L}\right) \sum_{2L \le |n| \le  4L} \sum_{\ell = 0}^\infty e^{-2\ell/L} \left|\langle \delta_n, U^\ell \delta_0^+ \rangle\right|^2  
\gtrsim
L^{p-1} \sum_{2L \le |n| \le  4L} \sum_{\ell = 0}^\infty e^{-2\ell/L} \left|\langle \delta_n, U^\ell \delta_0^+ \rangle\right|^2  .
\]
Now, apply the Parseval formula~\eqref{eq:pars} to get
\[
L^{p-1} \sum_{2L \le |n| \le  4L} \sum_{\ell = 0}^\infty e^{-2\ell/L} \left|\langle \delta_n, U^\ell \delta_0^+ \rangle\right|^2 
\gtrsim
L^{p-1} \sum_{2L \le |n| \le  4L} \int_0^{2\pi} \left|\langle \delta_n, (U-e^{i\tau + L^{-1}})^{-1} \delta_0^+ \rangle\right|^2 \, \frac{d\tau}{2\pi}.
\]
We want to concentrate on phases $\tau$ for which $e^{i \tau} \approx i$, so we can bound this from below by
\[
L^{p-1} \sum_{2L \le |n| \le  4L} \int_{B_L} \left|\langle \delta_n, (U-e^{i\tau + L^{-1}})^{-1} \delta_0^+ \rangle\right|^2 \, \frac{d\tau}{2\pi} ,
\]
where $B_L = [\pi/2-L^{-1},\pi/2 + L^{-1}]$. At last, applying \eqref{eq:resolventest}, and using that the length of $B_L$ is $2/L$, we have
\begin{align*}
L^{p-1} \sum_{2L \le |n| \le  4L} \int_{B_L} \left|\langle \delta_n, (U-e^{i\tau + L^{-1}})^{-1} \delta_0^+ \rangle\right|^2 \, \frac{d\tau}{2\pi}
\gtrsim
L^{p-1} \cdot L \cdot L^{-1} 
=
L^{p-1}.
\end{align*}
Thus, we have
\[
\left\langle |X|^p\right\rangle(L)
\gtrsim
L^{p-1},
\]
which suffices to prove the desired lower bound on $\widetilde \beta^-(p)$.

\end{proof}

\section*{Acknowledgements}

I am grateful to the Workshop on Quantum Simulation and Quantum Walks for the invitation to participate and for their generous hospitality during my visit in Yokohama. This work was supported in part by an AMS--Simons Travel Grant, 2016--2018.

\end{document}